\newtheorem{thm}{Theorem}[section]
\theoremstyle{plain}
\newtheorem{lem}[thm]{Lemma}
\newtheorem{prop}[thm]{Proposition}
\newtheorem{cor}[thm]{Corollary}
\theoremstyle{definition}
\theoremstyle{remark}
\newtheorem{remark}[thm]{Remark}
\numberwithin{equation}{section}
\newcommand{\C}{\mathbb C}
\newcommand{\Cl}{{\mathcal Cl}}
\newcommand{\tr}{\text{tr}}
\newcommand{\B}{\mathcal B} 
\newcommand{\typeM}{\texttt M}
\newcommand{\typeQ}{\texttt Q}
\newcommand{\be}{\beta}
\newcommand{\Z}{\mathbb Z}
\newcommand{\h}{V}
\newcommand{\mf}{\mathfrak}
\newcommand{\Hom}{\text{Hom} }
\newcommand{\wtd}{\widetilde}
\newcommand{\aHC}{\mathfrak{H}^{\mathfrak c}}
\title[Spin fake degrees for exceptional Weyl groups]
{Coinvariant algebras and fake degrees for spin Weyl groups of exceptional type}
\author[Baltera and Wang]{Constance Baltera and Weiqiang Wang}
\address{Department of Mathematics, University of Virginia, Charlottesville, VA 22904}
\email{cgb2k@virginia.edu (Baltera), \quad ww9c@virginia.edu (Wang)}
\begin{document}

\maketitle

\begin{abstract}
We compute all the spin
fake degrees for the exceptional Weyl groups, which are by definition the graded multiplicities of
the simple modules of a spin Weyl group in its spin coinvariant
algebra. The spin fake degrees are all shown to be palindromic
polynomials.
\end{abstract}

\let\thefootnote\relax\footnotetext{{\em 2010 Mathematics Subject Classification.} Primary 20C25; Secondary: 05E10.}

\section{Introduction}

 We formulated in  \cite{BW} (also see \cite{WW1}) the notion of spin coinvariant algebras and spin fake degrees
for every Weyl group $W$, 
which is a spin analogue of coinvariant algebras and fake degrees for Weyl groups \cite{Lu2}. 
The spin fake degrees
were computed in  \cite{WW1} for type $A$ and in \cite{BW} for the remaining classical types. 
The goal of this sequel to \cite{BW} is to compute all the spin fake degrees for the exceptional 
Weyl groups of types $G_2, F_4, E_6, E_7,$ and $E_8$.

The spin Weyl group algebra $\C W^-$ (associated to a distinguished double cover of $W$)
is naturally a superalgebra, and a $\C W^-$-module
is always understood to be $\Z_2$-graded  (see \cite[\S 2]{BW} for more detail). Informally speaking,
the spin fake degrees are  the graded multiplicities of
the simple modules of $\C W^-$  in its spin coinvariant algebra. 

We shall denote by $|\C W^-|$ the
underlying ungraded algebra. 
For $W$ of exceptional type, the split classes
of $W$ were classified (built on the work of Carter
\cite{Ca}), the simple ungraded $|\C W^-|$-characters
constructed, and the spin character tables  computed by Morris
\cite{Mo1}.

After a review in Section~\ref{sec:sfd} of the formulation 
of spin fake degrees in the framework of the ($\Z_2$-graded) module theory of the superalgebra $\C W^-$
 (see \cite[\S1, \S 3]{BW} for detail),
we establish in  Section~\ref{sec:Molien} a spin variant of Molien's formula,
a basic formula used in the computation of spin fake degrees in this paper.

In Section~\ref{sec:exceptional}, we classify the simple $\C
W^-$-modules and determine their types, refining the classification
of ungraded simple modules due to Morris \cite{Mo1}. Then we
write new code using CHEVIE \cite{CH} and \cite{GAP} to compute the spin fake
degrees in all exceptional cases, where as an input we use the spin character
tables computed in \cite{Mo1} (in which we note a typo in the $E_8$
spin character table). The GAP code for computing all spin fake degrees
for both exceptional and classical Weyl groups is available in \cite{Ba}. 
The spin fake degrees of all exceptional
types are tabulated (see Tables 1-5) in Section~\ref{sec:tables}.
We also take this opportunity to add Tables 6-9 for spin fake degrees 
of some classical Weyl groups of low rank.
Combining with the results for the classical types in \cite{BW},
we observe that all the spin fake degrees for all Weyl groups are 
palindromic. A similar palindromicity was observed for the usual
fake degrees by Beynon-Lusztig \cite{BL}.

\vspace{.3cm}

{\bf Acknowledgments.}
The second author is partially supported by
NSF grant DMS--1101268.
GAP and CHEVIE are indispensable to this paper.

\section{Spin coinvariant algebras and spin fake degrees}
\label{sec:sfd}

In this section we recall from \cite{BW} the notion of spin Weyl groups, spin coinvariant algebras,
and spin fake degrees.   

\subsection{Spin Weyl groups} \label{weyl gps}

Let $W$ be a 
Weyl group with the following presentation:
\begin{eqnarray} \label{eq:weyl}
\langle s_1,\ldots,s_n \mid  (s_is_j)^{m_{ij}} = 1,\ m_{i i} = 1,
 \ m_{i j} = m_{j i} \in \Z_{\geq 2}, \text{for }  i
 \neq j \rangle.
\end{eqnarray}
We shall label the vertices of the Coxeter diagrams of the exceptional Weyl groups as in \cite[\S 2.1]{BW}. 

In this paper (as in \cite{KW, BW}), we shall be concerned exclusively
with a distinguished double covering $\wtd{W}$ of $W$:
\begin{equation}
\label{eq:ses}
 1 \longrightarrow \Z_2 \longrightarrow \wtd{W}
\stackrel{\theta}{\longrightarrow} W \longrightarrow 1.
\end{equation}
We denote by $\Z_2 =\{1,z\}$.
%
The quotient algebra of $\C \wtd{W}$ by the ideal generated by $z+1$
is denoted by $\C W^-$ and called the {\em spin Weyl group algebra}
associated to $W$. 
The double cover $\wtd{W}$ is chosen so that
the spin Weyl group algebra $\C W^-$ has the following
uniform presentation: $\C W^-$ is the algebra generated by $t_i,
1\le i\le n$, subject to the relations
\begin{equation}
(t_{i}t_{j})^{m_{ij}} = (-1)^{m_{ij}+1}, \qquad \forall i,j.
\end{equation}
The algebra $\C W^-$ has a natural
superalgebra structure by letting each $t_i$ be odd.

\subsection{Spin coinvariant algebras and spin fake degrees}

Denote by $\h$ the irreducible reflection representation of the Weyl group $W$.
Note that $\h$ carries a $W$-invariant nondegenerate bilinear form
$(\cdot,\cdot)$, and let
$\Cl_\h$ be the Clifford algebra associated to $(\h, (\cdot,\cdot))$.
The action of $W$ on $\h$ preserves the bilinear form
$(\cdot,\cdot)$ and thus $W$ acts as automorphisms of the algebra
$\Cl_\h$. This gives rise to a semi-direct product
$
\aHC_W :=\Cl_\h \rtimes  W,
$
which is called the {\em Hecke-Clifford algebra} for $W$. Note that
$\Cl_\h$ is naturally an $\aHC_W$-module. 
The algebra $\aHC_W$ is endowed with a superalgebra
structure by letting each element in $W$ be even and each $\be_i$ be
odd.

A module over a superalgebra $ A$
is always understood in this paper as a $\Z_2$-graded $
A$-module $M = M_{\bar 0} \oplus M_{\bar 1}$ whose grading is
compatible with the action of $ A$; that is, $ A_iM_j
\subseteq M_{i+j}$. We shall denote by $| A|$ the underlying
algebra of $A$ with $\Z_2$-grading forgotten, and by $|M|$ the
$| A|$-module with the $\Z_2$-grading of $M$ forgotten.
We denote by $A\text{-}\mf{mod} $ the category of finite-dimensional $A$-modules.

Accordingly to \cite[Theorem 2.4]{KW},
there exists an explicit isomorphism of superalgebras:
$$
\Phi:  \aHC_W = \Cl_\h \rtimes  W
\stackrel{\simeq}{\longrightarrow} \Cl_\h \otimes \C W^-,
$$
which extends the identity map on $\Cl_\h$.
We say the superalgebras $\aHC_W$ and $\C W^-$ are Morita super-equivalent, as there exists a functor
$
\mathfrak{G} : 
B\text{-}\mf{mod} \longrightarrow A\text{-}\mf{mod},$
which is almost an equivalence of categories (see \cite[Proposition~3.3]{BW}).
In particular, it follows by \cite[Theorem~3.5]{BW} that we have $\mf G (\Cl_\h) \cong \B_W$ as $\C W^-$-modules, 
where $\B_W$ is the basic spin module of $\C W^-$  (see \cite[\S 3.3]{BW}).

The Weyl group $W$ acts on $V$
as its reflection representation, and then on the symmetric
algebra $S^*V$. The
coinvariant algebra $(S^*V)_W =S^*V/\langle(S^*V)^W_+\rangle$, where
$\langle(S^*V)^W_+\rangle$ denotes the ideal generated by the
homogeneous $W$-invariants of positive degrees, is a graded regular
representation of $W$.
Following \cite{BW}, we call $\Cl_V\otimes (S^*V)_W$
the {\em spin coinvariant algebra} for $W$.
Note that
$$
\Cl_V\otimes (S^*V)_W=\bigoplus_k \Cl_V \otimes
(S^k V)_W
$$
is a
graded regular representation of the Hecke-Clifford superalgebra
$\aHC_W$, where $\Cl_V$ acts by left multiplication on the first
tensor factor and $W$ acts diagonally.
The functor $\mf G$ sends the $\aHC_W$-module $\Cl_V\otimes (S^*V)_W$ to 
the $\C W^-$-module $\B_W \otimes (S^*V)_W$.

Let $\chi$ be a simple $\aHC_W$-module or its character, and
let $\chi^-$ be a simple $\C W^-$-module or its character 
corresponding to $\chi$ under the Morita super-equivalence
$\mf G$. Define
\begin{align}  \label{eq:sfdegree}
\begin{split}
P_W(\chi, t)  &= \displaystyle \sum_k \dim \Hom_{\aHC_W} (\chi,
\Cl_\h \otimes (S^kV)_W) t^k,
 \\
P_W^-(\chi^-, t) &= \displaystyle \sum_k \dim \Hom_{\C W^-} (\chi^-,
\B_W \otimes (S^kV)_W) t^k,
  \\
H_W^-(\chi^-, t) &= \displaystyle \sum_k \dim \Hom_{\C W^-} (\chi^-,
\B_W \otimes S^kV) t^k.
\end{split}
\end{align}  
The precise relations among $P_W(\chi, t)$,
$P_W^-(\chi^-, t)$, and $H_W^-(\chi^-, t)$ were determined in \cite[\S3]{BW}. 
By \cite[Lemma~3.11]{BW}, we have 
\begin{equation}  \label{eq:P=H}
P_W^-(\chi^-, t) =H_W^-(\chi^-, t) \prod_{i=1}^n(1-t^{d_i}),
\end{equation}
where the $d_i$'s are the degrees of the Weyl group $W$. 
Following \cite{BW}, we call the polynomial 
$P_W^-(\chi^-, t)$ the {\em spin
fake degree} of the simple $\C W^-$-character $\chi^-$.

The main goal of this paper is to compute the spin fake degrees
$P_W^-(\chi^-, t)$ for every exceptional Weyl group $W$ and every
simple $\C W^-$-character $\chi^-$.

\section{Spin Molien's formula}
\label{sec:Molien}

In this section we formulate a
spin version of Molien's formula for later use in computing the
spin fake degrees.
Recall the double cover
$\theta: \wtd{W} \rightarrow W$ from \eqref{eq:ses}.

\begin{lem}  \label{lem:zerotr}
Let $\psi$ be a simple character of $\C W^-$ (or, equivalently, of $\wtd W$
where $z$ acts as $-1$), and let $\tilde x\in \wtd W$.
Then $\psi(\tilde x) =0$, unless $\tilde x \in \wtd W$ is even split.
\end{lem}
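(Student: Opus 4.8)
\emph{Proof plan.} The plan is to show that $\psi(\tilde x)$ can be nonzero only when $x:=\theta(\tilde x)$ lies in a split $W$-conjugacy class and $x$ has even length (the two conditions packaged as ``$\tilde x$ even split''); recall that the length parity is a well-defined homomorphism $W\to\Z_2$ since every relator $(s_is_j)^{m_{ij}}$ in \eqref{eq:weyl} has even length $2m_{ij}$. I would split the argument into the two failure modes, non-split and odd, and dispatch each by a one-line identity for the class function $\psi$.

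\emph{The non-split case.} First I would treat the case where the $W$-class of $x$ does not split in $\wtd W$. Then the full $\theta$-preimage of that class is a single $\wtd W$-conjugacy class, so in particular $\tilde x$ is $\wtd W$-conjugate to $z\tilde x$. Since $\psi$ is a class function on $\wtd W$ and $z$ acts as $-1$ on the module affording $\psi$, this gives $\psi(\tilde x)=\psi(z\tilde x)=-\psi(\tilde x)$, hence $\psi(\tilde x)=0$.

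\emph{The odd case.} Next I would treat the case $\ell(x)$ odd. Let $\sigma$ be the parity automorphism of the superalgebra $\C W^-$ --- the algebra automorphism determined by $\sigma(t_i)=-t_i$, which is well defined because $((-t_i)(-t_j))^{m_{ij}}=(t_it_j)^{m_{ij}}$, so all defining relations are preserved. Writing $x$ as a product of $\ell(x)$ of the generators $s_i$, the image of $\tilde x$ in $\C W^-$ is, up to sign, a product of $\ell(x)$ of the generators $t_i$, hence homogeneous of odd parity, so $\sigma(\tilde x)=-\tilde x$. On the other hand, for any $\Z_2$-graded $\C W^-$-module $M$ the $\sigma$-twist $M^{\sigma}$ (the same underlying space, with $a$ acting by $\sigma(a)$) is isomorphic to $M$ via the linear map that is the identity on $M_{\bar 0}$ and minus the identity on $M_{\bar 1}$; hence the character $\psi$ of $M$ satisfies $\psi\circ\sigma=\psi$. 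Evaluating at $\tilde x$ yields $\psi(\tilde x)=\psi(\sigma(\tilde x))=-\psi(\tilde x)$, so $\psi(\tilde x)=0$. Combining with the previous case, $\psi(\tilde x)\ne 0$ forces $\tilde x$ to be split and even.

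\emph{Where the difficulty lies.} I do not expect a genuine obstacle; the lemma is elementary. The one thing to be careful about is that the two cases use different inputs: the non-split case uses only that $\psi$ is a class function of $\wtd W$ with $z\mapsto-1$, whereas the odd case relies essentially on the $\Z_2$-grading that is part of the data of a $\C W^-$-module. Concretely, one should note that the isomorphism $M\cong M^{\sigma}$ holds for every graded module (not just simple ones) and that $\sigma$ acts on the image of $\wtd W$ in $\C W^-$ by the length sign $(-1)^{\ell(\theta(\cdot))}$; once these are in hand the vanishing is automatic. It is worth emphasizing that this grading is indispensable here: an ungraded simple $|\C W^-|$-module need not vanish on odd elements, and the stated vanishing appears only after one passes to graded simple modules.
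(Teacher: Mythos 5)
Your proof is correct and essentially matches the paper's. The non-split case is word for word the paper's argument, and for the odd case the paper simply observes directly that an odd element of the superalgebra acts as an odd endomorphism of the $\Z_2$-graded module $M=M_{\bar 0}\oplus M_{\bar 1}$, i.e.\ an off-diagonal block matrix, which trivially has trace zero --- the same $\Z_2$-grading fact you extract via the parity automorphism $\sigma$ and the isomorphism $M\cong M^\sigma$, just phrased without the extra formalism.
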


\begin{proof}
Let $M$ be the ($\Z_2$-graded) module of $\wtd W$ underlying $\psi$.
For $\tilde x$ odd, $\tilde x$ switches the even and odd parts of $M$, and
hence the trace of $\tilde x$ on $M$ is zero, i.e., $\psi(\tilde x)=0$.

If $\tilde x$ is even and non-split, then $\tilde x$ is conjugate to
$z\tilde x$ by definition. So we have the trace identity on $M$:
$\tr (\tilde x) =\tr (z\tilde x) =-\tr(\tilde x)$, since $z$ acts by
$-1$ on $M$. Hence again $\psi(\tilde x) =\tr(\tilde x) =0$.
\end{proof}

We shall denote by $\langle x\rangle$ the conjugacy class of $x\in
W$, and $C_x$ the centralizer of $x$ in $W$. The following
proposition is a variation on Molien's formula. For a finite group
$G$, we denote by $G_*^{es}$ the set of even split conjugacy classes
of $G$; cf. \cite{Joz1} and \cite[\S 2]{BW}.

\begin{prop} [Spin Molien's formula]
 \label{general sfd}
Let $W$ be a Weyl group, and let $\chi^-$ be a simple $\C
W^-$-character.  Then the graded multiplicity of $\chi^-$ in the $\C
W^-$-module $\B_W \otimes S^*V$ is
$$
H_W^-(\chi^-, t) =
\sum_{\langle x\rangle\in W_*^{\text{es}}} \frac{\chi^-(\tilde x)
\tr(\tilde x)|_{\B_W}}{|C_x|
\det(1-tx)},
 $$
 where $\tilde x\in \wtd W$ is chosen such that $x=\theta(\tilde x)$.
\end{prop}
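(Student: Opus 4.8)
The plan is to compute the graded multiplicity $H_W^-(\chi^-, t)$ directly from its definition in \eqref{eq:sfdegree} using characters, and then apply the vanishing result of Lemma~\ref{lem:zerotr} to reduce the sum over $\wtd W$ to a sum over even split conjugacy classes. Since $\C W^-$ is a superalgebra but the relevant Hom-space computation only involves the underlying ungraded structure (the multiplicity of a simple character in a module is detected on characters of $|\C W^-|$, equivalently on characters of $\wtd W$ with $z$ acting by $-1$), I would first express
$$
\dim \Hom_{\C W^-}(\chi^-, \B_W \otimes S^k V) = \frac{1}{|\wtd W|}\sum_{\tilde x \in \wtd W} \overline{\chi^-(\tilde x)}\, \psi_{k}(\tilde x),
$$
where $\psi_k$ is the character of $\B_W \otimes S^k V$ as a $\wtd W$-module. (Here one must be slightly careful about whether to use $\chi^-(\tilde x)$ or its complex conjugate; for spin representations of Weyl groups the characters are real on split classes, so this will wash out, but I would note it.)

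Next I would assemble the generating function. The character of $S^k V$ at $x = \theta(\tilde x)$ depends only on $x$ and equals the complete homogeneous symmetric function evaluated at the eigenvalues of $x$; summing over $k$ with $t^k$ gives the standard identity $\sum_k \tr(x)|_{S^kV}\, t^k = 1/\det(1 - tx)$. The character of $\B_W$ at $\tilde x$ is $\tr(\tilde x)|_{\B_W}$, which depends on $\tilde x$ and not merely on $x$. Therefore
$$
H_W^-(\chi^-, t) = \frac{1}{|\wtd W|} \sum_{\tilde x \in \wtd W} \frac{\chi^-(\tilde x)\, \tr(\tilde x)|_{\B_W}}{\det(1 - t\,\theta(\tilde x))}.
$$
Now I invoke Lemma~\ref{lem:zerotr}: the summand vanishes unless $\tilde x$ is even split. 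The map $\theta$ restricted to the set of even split elements of $\wtd W$ is two-to-one onto a union of conjugacy classes in $W$ (each even split class of $W$ has two preimages $\tilde x$ and $z\tilde x$ in $\wtd W$), and for a split class the values $\chi^-(\tilde x)$ and $\tr(\tilde x)|_{\B_W}$ are genuinely well-defined up to the common sign ambiguity, so that the product $\chi^-(\tilde x)\tr(\tilde x)|_{\B_W}$ is independent of the choice of preimage; meanwhile $\det(1 - t\,\theta(\tilde x)) = \det(1 - tx)$ only depends on $x$. Thus grouping the sum by $\langle x \rangle \in W_*^{\mathrm{es}}$, each such class contributes $|\wtd W| / (|W| \cdot |C_x| / |W|) \cdot$ -- more carefully, the number of $\tilde x \in \wtd W$ lying over a fixed class $\langle x\rangle$ is $2 \cdot |\langle x\rangle| = 2|W|/|C_x|$, and $|\wtd W| = 2|W|$, so the class $\langle x\rangle$ contributes exactly $\chi^-(\tilde x)\tr(\tilde x)|_{\B_W}/(|C_x|\det(1-tx))$. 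Summing over all even split classes yields the claimed formula.

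The main obstacle is the bookkeeping of the sign ambiguity inherent to spin (projective) characters: $\chi^-(\tilde x)$ and $\tr(\tilde x)|_{\B_W}$ are each only defined up to a sign determined by the choice of set-theoretic section of $\theta$ over a conjugacy class, so one must verify that the product appearing in the numerator is honestly section-independent on even split classes (it is, because replacing $\tilde x$ by $z\tilde x$ multiplies both factors by $-1$), and that on the discarded classes the terms truly vanish rather than merely being ill-defined -- which is exactly the content of Lemma~\ref{lem:zerotr}. Everything else (the Molien-type summation of $\tr(x)|_{S^kV}$, the orbit-counting $2|W|/|C_x|$, and the interchange of the finite sum over $\tilde x$ with the formal power series in $t$) is routine.
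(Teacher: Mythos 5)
Your proof is correct and follows essentially the same route as the paper's: express $H_W^-(\chi^-,t)$ as a $\wtd W$-character inner product, apply the Molien identity $\sum_k \tr(x)|_{S^kV} t^k = 1/\det(1-tx)$, invoke Lemma~\ref{lem:zerotr} to kill the odd and non-split terms, and then reorganize the sum over $\tilde x \in \wtd W$ into a sum over even split classes $\langle x\rangle$ of $W$ using the count $|\theta^{-1}(\langle x\rangle)| = 2|W|/|C_x|$ together with the observation that the product $\chi^-(\tilde x)\tr(\tilde x)|_{\B_W}$ is invariant under $\tilde x \mapsto z\tilde x$. The paper's proof passes through $\wtd W$-conjugacy classes as an intermediate step and invokes the realness of $\B_W$ and $S^*V$ characters to dispose of the conjugate/inverse, but this is the same argument with the bookkeeping arranged slightly differently.
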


\begin{proof}
The character values of the $\wtd W$-modules $\B_W$ and $S^*V$ are all real.
Using the standard $\wtd
W$-character inner product $(\cdot,\cdot)$ (see \cite[Theorem~4.12]{Joz1}), we compute
\allowdisplaybreaks{
\begin{align*}
H_W^-(\chi^-, t) &=(\chi^-, \B_W\otimes S_tV)
\\
& = \big(\chi^-, \B_W\otimes \sum_{j\ge
0} t^j (S^jV) \big)
\\
&= \frac1{|\wtd W|}\sum_{\tilde x\in \wtd
W} \chi^-(\tilde x)\tr(\tilde x^{-1})|_{\B_W}
\Big ( \sum_{j\ge0}\tr (x^{-1})|_{S^jV} t^j\Big)
\\
& = \frac1{2|W|}\sum_{\tilde x\in\wtd
W}\frac{\chi^-(\tilde x)\tr(\tilde x)|_{\B_W}}{\det(1-tx)}.
\end{align*}}
We first reorganize this sum over conjugacy classes $\langle\wtd
x\rangle$ of $\wtd W$. By Lemma~\ref{lem:zerotr}, we may restrict
$\langle\tilde x\rangle$ to even split classes from now on.  The
split conjugacy classes in $\wtd W$ are obtained as half of the
inverse image of split conjugacy classes in $W$, that is,
$\langle\wtd x\rangle \sqcup z\langle \wtd x\rangle =
\theta^{-1}(\langle x\rangle).$  The classes $\langle \wtd x\rangle$
and $z\langle \wtd x\rangle$ are of the same size (as the class
$\langle x\rangle$), but have opposite character values.
We may reorganize the sum now over even
split conjugacy classes of $W$. The details are as follows:
\begin{align*}
\frac1{2|W|}\sum_{\tilde x\in\wtd
W}\frac{\chi^-(\tilde x)\tr(\tilde x)|_{\B_W}}{\det(1- tx)}
&=\frac 12 \sum_{\langle \tilde x \rangle \in \wtd W_*^{es}}
\frac{\chi^-(\tilde x)\tr(\tilde x)|_{\B_W}}{|C_x|\det(1- tx)}
 \\
& =
\sum_{\langle x\rangle \in W_*^{es}}
\frac{\chi^-(\tilde x)\tr(\tilde x)|_{\B_W}}{|C_x|\det(1- tx)}.
\end{align*}
The proposition is proved.
\end{proof}

We have the following corollary of Proposition~\ref{general sfd}
thanks to \eqref{eq:P=H}.

\begin{cor} \label{molien cor}
Let $W$ be a Weyl group with degrees $d_1,\ldots, d_n$, and let
$\chi^-$ be a simple $\C W^-$-character.  Then the spin fake
degree of $\chi^-$ is
$$
P_W^-(\chi^-, t) =
\sum_{\langle x \rangle\in W_*^{es}}
\frac{\chi^-(\tilde x)\tr(\tilde x)|_{\B_W}}{|C_x| \det(1-tx)} \cdot \prod_{i=1}^n
 (1-t^{d_i}),
 $$
 where $\tilde x\in \wtd W$ is chosen such that $x=\theta(\tilde x)$.
\end{cor}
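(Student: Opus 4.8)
The plan is to derive Corollary~\ref{molien cor} as an immediate consequence of Proposition~\ref{general sfd} together with the relation \eqref{eq:P=H}. Since both ingredients are already established in the excerpt, the proof is essentially a one-line substitution, and the only thing to be careful about is bookkeeping with the degrees $d_i$.

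First I would recall that by \eqref{eq:P=H}, the spin fake degree $P_W^-(\chi^-,t)$ and the graded multiplicity $H_W^-(\chi^-,t)$ are related by
\[
P_W^-(\chi^-, t) =H_W^-(\chi^-, t) \prod_{i=1}^n(1-t^{d_i}),
\]
where $d_1,\ldots,d_n$ are the degrees of $W$. Then I would invoke Proposition~\ref{general sfd}, which gives the closed-form expression
\[
H_W^-(\chi^-, t) =
\sum_{\langle x\rangle\in W_*^{\text{es}}} \frac{\chi^-(\tilde x)
\tr(\tilde x)|_{\B_W}}{|C_x|\det(1-tx)},
\]
with $\tilde x\in\wtd W$ any lift of $x$ under $\theta$. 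Substituting this into the previous display yields exactly the claimed formula for $P_W^-(\chi^-,t)$.

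There is essentially no obstacle here; the substantive work has already been carried out in Proposition~\ref{general sfd} (whose proof rests on the Clifford-algebra character orthogonality of Józefiak \cite{Joz1} and Lemma~\ref{lem:zerotr}) and in \cite[Lemma~3.11]{BW}, which supplies \eqref{eq:P=H}. The one point worth a remark is that the right-hand side is manifestly a rational function of $t$, and one should observe that it is in fact a polynomial: this follows because the left-hand side $P_W^-(\chi^-,t)$ is a polynomial by its definition in \eqref{eq:sfdegree} as a generating function of nonnegative integer multiplicities, so the product $\prod_i(1-t^{d_i})$ in the numerator cancels the denominators $\det(1-tx)$ after summing over the even split classes. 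Thus the ``hard part,'' such as it is, is purely expository: making sure the reader sees that Corollary~\ref{molien cor} is the computationally useful repackaging of the previous two results, and that it is this formula—rather than $H_W^-$ itself—that will be fed into the CHEVIE/GAP computations of Section~\ref{sec:exceptional}, since it directly outputs the palindromic polynomials tabulated in Section~\ref{sec:tables}.
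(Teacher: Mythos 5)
Your proposal is correct and follows exactly the approach the paper intends: the corollary is stated as an immediate consequence of Proposition~\ref{general sfd} ``thanks to \eqref{eq:P=H},'' which is precisely the one-line substitution you carry out. The additional remark on polynomiality is a reasonable sanity check, though not part of the paper's (implicit) argument.
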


\section{The spin fake degrees of exceptional Weyl groups}
\label{sec:exceptional}

Let $W$ be an exceptional Weyl group throughout this section.

\subsection{List of split classes}

We first briefly recall Carter's parametrization of conjugacy
classes of Weyl groups by admissible diagrams \cite{Ca} as follows.
Given a conjugacy class, choose a representative element $w$, and
decompose it into a product of two involutions subject to certain
conditions (see \cite[Section 3]{Ca}).  Each involution is a product
of reflections corresponding to mutually orthogonal roots, and the
admissible diagram is a graph whose nodes correspond to (the roots
associated to) the reflections, with the edge between nodes
corresponding to roots $r$ and $s$ having weight
$4\frac{(r,s)(s,r)}{(r,r)(s,s)}$.  Many of the possible graphs
resemble Dynkin diagrams and are named accordingly. Carter shows
that if $w$ has an admissible diagram $\Gamma$, then so must all its
conjugates \cite[p.6]{Ca}, and that we may describe the conjugacy
classes of any Weyl group $W$ by such admissible diagrams
\cite[p.45]{Ca}.

Morris \cite{Mo1} has determined the split conjugacy classes for
the exceptional Weyl groups $W$ (with the double covers $\wtd W$),
and their descriptions are given in terms of Carter's
parametrization by admissible diagrams.

We also need to determine the parity of the split classes, and
this can be done in a simple and precise manner. It turns out that
the nodes of the admissible diagram that labels a conjugacy class of
$W$ correspond to the reflections in a certain decomposition of an
element in that class. Since all reflections in $W$ are odd, the
parity of each split conjugacy class of $W$ may be read off from the
number of nodes in the corresponding admissible diagram. Below we
summarize Morris' classification of split classes, enhanced by the
parity separation.

\begin{prop}  \label{prop:cc:except}
A complete list of split classes of every exceptional Weyl group $W$
is as follows.
\begin{enumerate}
\item[($E_6$)]
There are $9$ even split classes: $\emptyset, A_2, A_4, 2A_2,
D_4(a_1), 3A_2, E_6,$ $E_6(a_1),$ $E_6(a_2)$.  There are $4$ odd
split classes: $D_5, D_5(a_1), D_3+D_2,$ and $A_4+A_1$.

\item[($E_7$)]
There are $13$ even split classes: $\emptyset, D_4(a_1)$, $A_2$,
$2A_2$, $3A_2$, $E_6(a_2)$, $E_6$, $E_6(a_1)$, $A_4$, $A_4+A_2$,
$D_6(a_1)$, $A_6,$ and $D_6$;

\noindent There are $13$ odd split classes: $7A_1, 2A_3+A_1$,
$D_4+3A_1$, $D_6(a_2)+A_1$, $E_7(a_4),$ $A_5+A_2$, $E_7(a_2)$,
$E_7$, $D_6+A_1$, $E_7(a_3), A_7,$ $E_7(a_1), A_4+A_1$.

\item[($GFE$)]
All $3, 9, 30$ split classes of $G_2$, $F_4,$ and $E_8$ listed in
\cite[\S8,\S9]{Mo1} are even.
\end{enumerate}
\end{prop}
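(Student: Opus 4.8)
The plan is to combine two ingredients: Morris' determination of the split conjugacy classes of each exceptional Weyl group $W$ (stated in terms of Carter's admissible diagrams), and the parity criterion reading off the number of nodes in the associated admissible diagram. First I would recall precisely what Morris \cite{Mo1} established --- namely the \emph{list} of split classes for each of $G_2, F_4, E_6, E_7, E_8$; this is purely a citation, so the only remaining content is the partition of each list into even and odd classes. The key observation making this routine is the following: a representative of the conjugacy class labelled by an admissible diagram $\Gamma$ can be written as a product of $|\Gamma|$ reflections of $W$ (one per node of $\Gamma$, namely the reflections in the two-involution decomposition underlying Carter's construction). Since each reflection lifts to an odd element of $\C W^-$ (equivalently, each $t_i$ is odd in the presentation of $\C W^-$, and all reflections are $W$-conjugate to generators up to products of generators that preserve parity), the lift $\tilde x$ of such a representative is even precisely when $|\Gamma|$ is even, and odd precisely when $|\Gamma|$ is odd.

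The second step is to carry out this parity count for each diagram in Morris' lists. For $E_6$, the even-node diagrams are $\emptyset$ ($0$ nodes), $A_2$ ($2$), $A_4$ ($4$), $2A_2$ ($4$), $D_4(a_1)$ ($4$), $3A_2$ ($6$), $E_6$ ($6$), $E_6(a_1)$ ($6$), $E_6(a_2)$ ($6$), giving the $9$ even classes; the odd-node ones are $D_5$ ($5$), $D_5(a_1)$ ($5$), $D_3+D_2$ ($5$), $A_4+A_1$ ($5$), giving the $4$ odd classes. For $E_7$ one checks similarly that the $13$ listed even classes all have an even number of nodes ($D_4(a_1), A_2, 2A_2, 3A_2, E_6(a_2), E_6, E_6(a_1), A_4, A_4+A_2, D_6(a_1), A_6, D_6$, plus $\emptyset$) and the $13$ odd ones ($7A_1, 2A_3+A_1, D_4+3A_1, D_6(a_2)+A_1, E_7(a_4), A_5+A_2, E_7(a_2), E_7, D_6+A_1, E_7(a_3), A_7, E_7(a_1), A_4+A_1$) all have an odd number of nodes. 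For $G_2$, $F_4$, and $E_8$, one observes that every split class in Morris' list \cite[\S8,\S9]{Mo1} is labelled by a diagram with an even number of nodes, so all split classes of these three groups are even; in particular there are $3$, $9$, and $30$ of them respectively, all even.

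The one genuine point needing justification --- and the place I expect the main (though still mild) obstacle --- is the claim that the admissible-diagram nodes really do correspond to reflections whose product represents the class, so that parity equals node-count mod $2$. This requires unpacking Carter's construction: his proof that every element of $W$ is a product of two involutions, each of which is a product of reflections in mutually orthogonal roots, and that the total number of such reflections equals the number of nodes of the admissible diagram (equivalently, the rank of the sublattice spanned by those roots). I would cite \cite[Section 3]{Ca} for the existence of this decomposition and note that parity in $\wtd W$ is well-defined on conjugacy classes (since the sign character of $\C W^-$ descends from the number-of-reflections-mod-$2$ invariant, which is a class function --- indeed it is just the restriction of the determinant/length-parity homomorphism $W \to \{\pm 1\}$ composed appropriately), so the count is independent of the chosen decomposition. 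Once this is in place, the proposition is an immediate bookkeeping consequence, and I would simply present it as ``enhanced by the parity separation'' as the statement already phrases it.
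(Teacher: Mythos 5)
Your proposal matches the paper's argument exactly: cite Morris's list of split classes from \cite{Mo1}, then separate each list by parity, which is determined by counting the nodes of Carter's admissible diagram because the nodes label reflections in a decomposition of a class representative and every reflection is odd. The extra justifications you give (parity is a class function since it is the pull-back of the sign/determinant homomorphism, and the node-count agrees with the number of reflections in Carter's two-involution decomposition) are exactly the points the paper leaves implicit, so this is the same proof, just slightly more spelled out.
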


\subsection{Classification of simple modules}

Recall a simple module of a superalgebra $A$ always has endomorphism algebra of dimension $1$ or $2$, cf. \cite{Joz1} and \cite[\S 2]{BW}.
A simple $A$-module
 is called type $\texttt M$ (and respectively, type $\texttt Q$)
if its endomorphism algebra is of dimension $1$ (and respectively, $2$).
Moreover, there is a simple recipe which determines the numbers of
simple $\C W^-$-modules of type $\texttt M$ and of type $\texttt Q$ by knowing
the numbers of even split and odd split classes of $W$, cf. \cite[Proposition~4.14]{Joz1} and \cite[Proposition~2.5]{BW}.

Now Proposition~\ref{prop:cc:except}
allow us to determine the numbers of simple $\C W^-$-modules of type
$\typeM$ and type $\typeQ$. Morris \cite{Mo1} (also cf. Read
\cite{Re1} for $F_4$) has determined the spin character table of all
the {\em ungraded} simple characters of $|\C W^-|$. It turns out
that we may determine which pairs of ungraded simple characters add
to become type $\typeQ$ characters of $\C W^-$ since the character
values for any $\C W^-$-module on the odd split classes must be 0.

We will follow Morris's notation \cite{Mo1}, referring to the
irreducible spin characters for the exceptional Weyl groups by their
degrees with a subscript $s$ (or $ss$, $sss$, $ssss$, for multiple
characters of the same degree).  In cases where two ungraded simple
characters of the same degree add to become a type $\typeQ$
character (which only happens in $E_6, E_7$), we will denote the type
$\typeQ$ character by its degree with the shortest possible
subscript and a superscript $\typeQ$.

The Weyl group $E_7$ is the direct product of the Chevalley group
$B_3(2)$ and a cyclic group $\langle\xi\rangle$ of order $2$, where
all elements in $B_3(2)$ are even while the generator $\xi$ is odd.
Hence the simple $E_7$-modules are exactly the tensor products of
simple $B_3(2)$-modules with the unique two-dimensional simple
module of $\langle\xi\rangle$ (of type $\typeQ$), and they are
manifestly all of type $\typeQ$. Table~E summarizes the conversion
between the new type $\typeQ$ notation and Morris's ungraded simple
characters.

\begin{center}
\vspace{.2cm}
{Table E: Type $\typeQ$ characters as sums of ungraded simple characters}
\vspace{.2cm}

\resizebox{5.6in}{!}{\begin{tabular}[h]{|c| c| ccccc|}
\hline
\multirow{2}{*}{$E_6$}
& Type $\typeQ$ character &$120_{ss}^\typeQ$ &$160_s^\typeQ$
&$40_{sss}^\typeQ$ &$128_s^\typeQ$ & \\
& Ungraded characters &$60_s + 60_{ss}$ &$80_s+80_{ss}$
&$20_s+20_{ss}$ &$64_s+64_{ss}$ & \\
\hline\hline
\multirow{6}{*}{$E_7$}
& Type $\typeQ$ character &$16_s^\typeQ$
 &$96_s^\typeQ$ &$336_s^\typeQ$ &$560_s^\typeQ$ &$224_s^\typeQ$  \\
& Ungraded characters &$8_s+8_{ss}$ &$48_s+48_{ss}$
&$168_s+168_{ss}$ &$280_s+280_{ss}$ &$112_s+112_{sss}$ \\
\cline{2-7}
& Type $\typeQ$ character &$224_{ss}^\typeQ$ &$1024_s^\typeQ$
 &$1440_s^\typeQ$ &$1120_s^\typeQ$ &$896_s^\typeQ$ \\
& Ungraded characters &$112_{ss}+112_{ssss}$
 & $512_s+512_{ss}$& $720_s+720_{ss}$&$560_s+560_{ss}$
&$448_s+448_{ss}$ \\ \cline{2-7}
& Type $\typeQ$ character & $240_s^\typeQ$ &$128_s^\typeQ$ &$128_{ss}^\typeQ$& & \\
& Ungraded characters &$120_s+120_{ss} $& $64_s+64_{sss}$&$64_{ss}+64_{ssss}$ & & \\
\hline
\end{tabular}}
\vspace{.2cm}
\end{center}

In this way, we have upgraded the results of Morris \cite{Mo1} into the
following.

\begin{prop}
The types of the simple $\C W^-$ modules, for $W$ an exceptional
Weyl group, are as follows.
\begin{enumerate}
\item
$\C E_6^-$ has $5$ type $\typeM$ simple modules $8_s, 40_s, 72_s,
40_{ss}, 120_s$, and $4$ type $\typeQ$ simple modules
$120_{ss}^\typeQ, 160_s^\typeQ, 40_{sss}^\typeQ, 128_s^\typeQ$.

\item All $13$ simple $\C E_7^-$-modules are of type $\typeQ$.

\item
All $30$ simple $\C E_8^-$-modules are of type $\typeM$, as are all
$9$ simple $\C F_4^-$-modules and all $3$ simple $\C G_2^-$-modules.
\end{enumerate}
\end{prop}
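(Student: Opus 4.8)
The plan is to derive everything from the combinatorial recipe (\cite[Proposition~4.14]{Joz1}, \cite[Proposition~2.5]{BW}) that converts the split-class data of Proposition~\ref{prop:cc:except} into the $\typeM$/$\typeQ$ count, and then to use the vanishing of $\C W^-$-characters on odd split classes (Lemma~\ref{lem:zerotr}) together with Morris' ungraded spin character tables to pin down which ungraded simple characters of $|\C W^-|$ fuse into a single type $\typeQ$ character. First I would record, for each exceptional $W$, the numbers $e$ and $o$ of even split and odd split classes as given by Proposition~\ref{prop:cc:except}: for $E_6$, $(e,o)=(9,4)$; for $E_7$, $(e,o)=(13,13)$; for $G_2$, $F_4$, $E_8$, all split classes are even, so $o=0$ with $e=3,9,30$ respectively. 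The recipe then says that the number of type $\typeQ$ simple $\C W^-$-modules equals (roughly) the number of odd split classes, while the total number of simple $\C W^-$-modules accounts for the even split classes, so for $G_2$, $F_4$, $E_8$ one immediately gets no type $\typeQ$ modules, hence all simple modules are type $\typeM$, giving part~(3); and for $E_6$ one gets exactly $4$ type $\typeQ$ and the remaining $5$ of type $\typeM$.

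Next I would handle the identification of characters. Since any $\C W^-$-module has character value $0$ on every odd split class (a simple module of type $\typeQ$ is one whose ungraded restriction is a sum of two non-isomorphic $|\C W^-|$-simples, the two being swapped by the parity automorphism), I would scan Morris' ungraded spin character table for $E_6$ and observe that the ungraded simples come in the pairs $60_s+60_{ss}$, $80_s+80_{ss}$, $20_s+20_{ss}$, $64_s+64_{ss}$ whose sums vanish on the four odd classes $D_5,\,D_5(a_1),\,D_3+D_2,\,A_4+A_1$, while the remaining five ungraded simples $8_s,40_s,72_s,40_{ss},120_s$ already vanish there on their own; this yields the $\typeM$/$\typeQ$ split in part~(1) with the stated notation $120_{ss}^\typeQ,160_s^\typeQ,40_{sss}^\typeQ,128_s^\typeQ$. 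For $E_7$ the quickest route is the structural observation already in the text: $W(E_7)\cong B_3(2)\times\langle\xi\rangle$ with $B_3(2)$ even and $\xi$ odd, so every simple $\C E_7^-$-module factors as (simple $B_3(2)$-module)$\,\otimes\,$(the $2$-dimensional simple of $\langle\xi\rangle$), and the latter is forced to be type $\typeQ$ because $\xi$ is odd and $\xi^2=1$ with $z=-1$ cannot be realized in dimension one; a type $\typeM$ tensor a type $\typeQ$ is type $\typeQ$, so all $13$ are type $\typeQ$, proving part~(2). The pairings in Table~E for $E_7$ are then just the bookkeeping of which $B_3(2)$-pair gives which $\typeQ$ label, again confirmed by vanishing on the $13$ odd split classes.

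The main obstacle I expect is not conceptual but the verification burden: one must actually inspect Morris' spin character tables \cite{Mo1} entry by entry (with the noted $E_8$ typo corrected) to confirm that the asserted pairs — and only those pairs — of equal-degree ungraded simples fuse, i.e. that their individual characters fail to vanish on the odd classes but their sums do, and that no type $\typeM$ candidate secretly pairs with another. This is a finite but delicate table check, best carried out with the CHEVIE/GAP code of \cite{Ba}; once it is done, the counts from the Ji\v{c}-Schur recipe and the vanishing criterion leave no ambiguity, and Table~E together with parts~(1)--(3) follows. A secondary small point is to double-check the recipe's exact statement (whether it counts $\typeQ$ modules as equal to, or shifted by one from, the number of odd split classes, depending on the parity of $W$ and of $n$); matching it against the $E_6$ and $E_7$ totals $9$ and $13$ serves as an internal consistency check.
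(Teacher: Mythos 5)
Your proposal matches the paper's own argument essentially step for step: you use the same Józefiak/BW recipe (which you garble as ``Ji\v{c}-Schur'') converting the even/odd split-class counts of Proposition~\ref{prop:cc:except} into the $\typeM/\typeQ$ tallies, you invoke the vanishing of super-characters on odd split classes (Lemma~\ref{lem:zerotr}) to detect which pairs of Morris's ungraded simples fuse into type $\typeQ$ characters, and you use the same structural factorization $W(E_7)\cong B_3(2)\times\langle\xi\rangle$ to conclude part~(2). The only differences are cosmetic: your explanation of why the two-dimensional $\langle\xi\rangle$-module must be type $\typeQ$ (no one-dimensional supermodule admits an odd invertible operator) is spelled out slightly more explicitly than in the paper, and you candidly flag that the remaining content is a finite table check against \cite{Mo1}, which is exactly what the paper also leaves to the reader/CHEVIE.
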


\subsection{Spin fake degrees}
\label{subsec:sfdegexcept}

The main computational tool for the spin fake degrees of the
exceptional Weyl groups is the spin Molien's formula, see
Proposition~\ref{general sfd} or Corollary~ \ref{molien cor}. We
 implement this using CHEVIE; code is available upon request. 
To that end, as inputs we shall need the
spin character tables of $\C W^-$, which were computed by Morris
\cite{Mo1} in the ungraded setting.

More precisely, we use the spin character tables of Morris for $E_6$
\cite[Table III]{Mo1}, $E_7$ \cite[Table IV]{Mo1}, $E_8$ \cite[Table
V]{Mo1}, and $G_2$ \cite[Table VI]{Mo1}. In cases of $E_6$ and
$E_7$, we need to add suitable pairs of columns in the spin
character tables to form the characters of type $\typeQ$ simple
modules.

\begin{remark}
There is a typo in the $E_8$ spin character table in \cite{Mo1}: the thirteenth
entry of the last character should be 2 rather than $-2$. This is
detected and corrected using the orthogonality relations of simple
characters.
\end{remark}

For $W$ of type $F_4$, Read has labeled its 9 simple characters as
$\phi_1,\ldots,\phi_9$. See Table~ F for a comparison between Read's
notation \cite[Table 1]{Re1} and Morris's \cite[Table VII]{Mo1}.

\begin{center}
\vspace{.2cm}
{Table F: Two labelings of the spin character table for $F_4$}
\vspace{.2cm}

\begin{tabular}[h]{|c| c c c c c c c c c|}
\hline Morris's labels & $4_s$ & $4_{ss}$ & $8_{sss}$ & $8_{ssss}$
 & $12_{ss}$ & $12_s$ & $8_s$ & $24_s$ & $8_{ss}$\\
\hline Read's labels & $\phi_1$ & $\phi_2$ & $\phi_3 $& $\phi_4 $&
 $\phi_5 $& $\phi_6 $& $\phi_7$ & $\phi_8 $& $\phi_9$\\
\hline
\end{tabular}
\vspace{.2cm}
\end{center}
The character values which we use in the computation of the spin
fake degrees for $F_4$ are those given by Read \cite[Table~1]{Re1}.
They differ by a sign  in the conjugacy classes $A_2, \wtd A_2$, and
$B_2$ from those given by Morris \cite[Table~VII]{Mo1} because of a
different choice of these $\wtd F_4$-conjugacy classes (differing by
a factor $z=-1$). However this does not affect the computation of
the spin fake degrees.

We summarize our CHEVIE computations in
Proposition~\ref{exceptionpalindrome} and Theorem~\ref{th:sfd:excep}
below. Denote by $N$ the number of reflections in $W$. The following can be 
observed by inspection case-by-case from our CHEVIE computation.

\begin{prop}\label{exceptionpalindrome}
Let $W$ be an arbitrary exceptional Weyl group. Then for every
simple $\C W^-$-character $\chi$, we have
$$
P_W^-(\chi, t) = t^N P_W^-(\chi, t^{-1}).
$$
\end{prop}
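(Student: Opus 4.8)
The plan is to prove the palindromicity $P_W^-(\chi,t)=t^N P_W^-(\chi,t^{-1})$ directly from the spin Molien formula in Corollary~\ref{molien cor}, rather than case-by-case. Write
$$
P_W^-(\chi, t) = \sum_{\langle x \rangle\in W_*^{es}}
\frac{\chi(\tilde x)\tr(\tilde x)|_{\B_W}}{|C_x|} \cdot
\frac{\prod_{i=1}^n (1-t^{d_i})}{\det(1-tx)}.
$$
The first step is to understand the palindromic behavior of each summand. For a fixed $x$ with eigenvalues $\zeta_1,\dots,\zeta_n$ on $V$ we have $\det(1-tx)=\prod_j(1-\zeta_j t)$, so
$$
\frac{\prod_{i=1}^n (1-t^{d_i})}{\det(1-tx)}
$$
is a rational function; substituting $t\mapsto t^{-1}$ and clearing denominators by $t^{\sum d_i - n}=t^N$ (using the classical identity $N=\sum_i d_i-n=\sum_i(d_i-1)$), one computes that
$$
t^N \cdot \frac{\prod_{i=1}^n (1-(t^{-1})^{d_i})}{\det(1-t^{-1}x)}
= (-1)^n \Big(\prod_j \zeta_j^{-1}\Big)\prod_{i=1}^n t^{d_i-1}\cdot\frac{t^{\sum d_i}\prod_i(t^{-d_i}-1)}{\cdots}
$$
— more cleanly, since $(1-t^{-d})=-t^{-d}(1-t^d)$ and $(1-\zeta^{-1}t^{-1})=-\zeta^{-1}t^{-1}(1-\zeta t)$, the powers of $t$ from numerator and denominator combine to exactly $t^{\sum d_i - n}=t^N$ and the scalars combine to $(-1)^n/(-1)^n \cdot \det(x)^{-1}=\det(x)^{-1}$. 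So the summand for $x$ transforms into $\det(x)^{-1}=\det(x^{-1})$ times the summand for $x^{-1}$ (with the same $\chi,\tr|_{\B_W}$ prefactors, which I must track separately).

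The second step is to match $x$ with $x^{-1}$ across the sum. Here I would invoke the standard fact that in a Weyl group every element is conjugate to its inverse, and moreover the split property and parity are preserved under $x\mapsto x^{-1}$ (the parity is just the number of reflection factors mod $2$, and inversion reverses a word without changing its length), so $\langle x\rangle\in W_*^{es}\iff\langle x^{-1}\rangle\in W_*^{es}$ and $|C_x|=|C_{x^{-1}}|$. I then need the character compatibility: $\chi(\widetilde{x^{-1}})=\overline{\chi(\tilde x)}=\chi(\tilde x)$ (real, by the proof of Proposition~\ref{general sfd}, where all relevant $\widetilde W$-characters were noted to be real), and similarly $\tr(\widetilde{x^{-1}})|_{\B_W}=\tr(\tilde x)|_{\B_W}$; one must check that a lift of $x^{-1}$ can be taken to be $\tilde x^{-1}$ and that the even-split condition makes this choice well-defined up to the sign ambiguity that does not matter after the reality argument. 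Finally, since $x$ is a real reflection, $\det(x^{-1})=\det(x)=\pm1$; in fact $\det(x)=(-1)^{\ell}$ where $\ell$ is the number of reflections, but for even split classes I expect $\det(x)=1$ — and in any case the sum only runs over even split classes, on which $\tr(\tilde x)|_{\B_W}\neq 0$ forces a constraint I should exploit. Reindexing $x\leadsto x^{-1}$ then yields $t^N P_W^-(\chi,t^{-1})=\sum_x \det(x)^{-1}\cdot(\text{summand for }x)$, which equals $P_W^-(\chi,t)$ provided $\det(x)=1$ on every even split class.

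The main obstacle is precisely pinning down that $\det(x)=1$ for every even split class $\langle x\rangle$ — equivalently that the summand's extra scalar $\det(x)^{-1}$ is trivial. I would handle this by noting that $\B_W$ descends from the basic spin representation, on which $\tr(\tilde x)|_{\B_W}$ is (up to a power of $2$ and a sign) $\prod_j(\zeta_j^{1/2}-\zeta_j^{-1/2})$-type data, so that $\tr(\tilde x)|_{\B_W}\neq 0$ only when no eigenvalue equals $1$, i.e. $x$ is elliptic in a parabolic sense; combined with the ``even'' condition ($\ell$ even, so $\det(x)=(-1)^\ell=1$), this forces $\det(x)=1$ on the nose for every term that contributes. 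Thus the nonzero terms already satisfy $\det(x)=1$, the reindexing is an involution on the contributing classes, and palindromicity follows. Alternatively, if a uniform argument for $\det(x)=1$ proves elusive, one falls back on the case-by-case CHEVIE verification already described, checking the split-class lists in Proposition~\ref{prop:cc:except} satisfy $\det=1$ — but I would first try to make the Molien-formula argument uniform, as that is the more satisfying route and explains the observed palindromicity for classical types simultaneously.
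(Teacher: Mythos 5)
The paper's own ``proof'' of Proposition~\ref{exceptionpalindrome} consists of the single sentence preceding it: the palindromicity is ``observed by inspection case-by-case from our CHEVIE computation,'' i.e.\ it is a computational verification, not a conceptual argument. Your proposal, by contrast, derives the palindromicity directly from the spin Molien formula, and the core of your argument is in fact correct. This is a genuinely different and more illuminating route: it simultaneously explains the phenomenon for classical and exceptional Weyl groups, and shows \emph{why} the extra sign twist that obstructs palindromicity of ordinary fake degrees (and is the whole point of Beynon--Lusztig) is absent in the spin setting.

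Two things you treated as obstacles are in fact not, and the argument can be made sharper by noting:
(1) The condition $\det(x)=1$ is \emph{automatic} for every even split class, with no ellipticity input. An element is even in $\wtd W$ exactly when it is a product of an even number of reflections, and each reflection has determinant $-1$ on $V$, so ``even'' $\Leftrightarrow$ $\det(x)=(-1)^{\ell(x)}=1$. The observation that $\tr(\tilde x)|_{\B_W}\neq 0$ forces $x$ to be ``elliptic'' is a red herring here.
(2) In any Weyl group every element is conjugate to its inverse (Carter's decomposition $x=\sigma\tau$ with $\sigma,\tau$ involutions gives $\sigma x\sigma^{-1}=x^{-1}$). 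Hence $\det(1-tx^{-1})=\det(1-tx)$, and so the computation
\begin{equation*}
t^{N}\,\frac{\prod_{i=1}^{n}(1-t^{-d_{i}})}{\det(1-t^{-1}x)}
\;=\;\frac{1}{\det(x)}\cdot\frac{\prod_{i=1}^{n}(1-t^{d_{i}})}{\det(1-tx^{-1})}
\;=\;\frac{\prod_{i=1}^{n}(1-t^{d_{i}})}{\det(1-tx)}
\end{equation*}
(using $N=\sum_i d_i-n$ and $\det(x)=1$) shows each individual summand of the spin Molien formula in Corollary~\ref{molien cor} is already palindromic. Consequently there is no need to reindex the sum by $x\mapsto x^{-1}$, and hence no need to match lifts $\tilde x\leftrightarrow\tilde x^{-1}$, track the split structure under inversion, or invoke reality of $\chi^-$ or of $\tr|_{\B_W}$. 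The constants $\chi^{-}(\tilde x)\tr(\tilde x)|_{\B_W}/|C_x|$ are just carried along. With these simplifications your argument becomes a clean, uniform, two-line proof of palindromicity for all Weyl groups, which is strictly more than what the paper records.
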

(The statement already holds for classical Weyl groups \cite{BW}.)


We exploit this property when presenting the spin fake degrees for
the exceptional groups in Table~ \ref{sfd G2} through Table~\ref{sfd
E8} in Section~\ref{sec:tables}.  In each column of the tables, we
list only the coefficients of the spin fake degrees $P_W^-(\chi, t)$
(which are polynomials in $t$) for degrees $0$ through $\frac N 2$.
The remaining halves of coefficients can be determined via
palindromicity in Proposition~\ref{exceptionpalindrome}.

\begin{thm}  \label{th:sfd:excep}
For $W$ exceptional, the coefficients of the spin fake degrees
$P_W^-(\chi, t)$ are given in Table~\ref{sfd G2} through
Table~\ref{sfd E8} in Section~\ref{sec:tables}.
\end{thm}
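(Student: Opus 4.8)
The plan is to reduce the theorem to a finite, explicit computation driven by Corollary~\ref{molien cor}, and then carry that computation out case-by-case for $W \in \{G_2, F_4, E_6, E_7, E_8\}$. The essential point is that Corollary~\ref{molien cor} expresses each spin fake degree $P_W^-(\chi^-, t)$ as a sum over the \emph{even split} conjugacy classes of $W$ of terms $\frac{\chi^-(\tilde x)\,\tr(\tilde x)|_{\B_W}}{|C_x|\,\det(1-tx)}$, multiplied by $\prod_{i=1}^n(1-t^{d_i})$. Every ingredient on the right-hand side is, in principle, known: the even split classes and their sizes (hence $|C_x|$) are listed in Proposition~\ref{prop:cc:except}; the values $\chi^-(\tilde x)$ come from Morris's spin character tables \cite{Mo1} (with the $E_8$ typo corrected as noted in the Remark, and with pairs of columns added in the $E_6, E_7$ cases to produce the type~$\typeQ$ characters, following Table~E); the degrees $d_i$ are the standard invariant degrees of the exceptional Weyl groups; and $\det(1-tx)$ on $V$ is determined by the eigenvalues of $x$ on the reflection representation, which for Carter's admissible-diagram parametrization are recorded via the characteristic polynomials of the classes.

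First I would assemble, for each exceptional $W$, the required data into a form suitable for a symbolic computation: the list of even split classes $\langle x\rangle$ with centralizer orders $|C_x|$, the characteristic polynomial $\det(1-tx)$ of each $x$ acting on $V$, the character values $\tr(\tilde x)|_{\B_W}$ of the basic spin module on the corresponding lifts $\tilde x \in \wtd W$, and the table of values $\chi^-(\tilde x)$ for every simple $\C W^-$-character $\chi^-$. The value $\tr(\tilde x)|_{\B_W}$ deserves a word: since $\mf G(\Cl_V) \cong \B_W$, the basic spin character is computed from the Clifford-module character, and on an even split class it is (up to the usual sign/normalization) a power of $2$ times a product of factors $(1-\zeta)$ over the eigenvalues $\zeta \ne 1$ of $x$ on $V$; alternatively it is simply another column of Morris's tables. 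Then I would feed these into CHEVIE together with \cite{GAP}, form the rational function $\sum_{\langle x\rangle} \frac{\chi^-(\tilde x)\,\tr(\tilde x)|_{\B_W}}{|C_x|\,\det(1-tx)}$, clear denominators against $\prod_{i=1}^n(1-t^{d_i})$, and verify that the result is a polynomial in $t$ with nonnegative integer coefficients (a consistency check that the inputs are correct). Reading off these coefficients for degrees $0$ through $N/2$ — the rest being forced by Proposition~\ref{exceptionpalindrome} — produces exactly the entries displayed in Tables~\ref{sfd G2}--\ref{sfd E8}, which is the assertion of the theorem.

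The main obstacle is not conceptual but one of correctness and bookkeeping of the input data. The spin character tables of \cite{Mo1} use Carter's admissible-diagram labels and a particular choice of lifts $\tilde x$, and the sign of $\chi^-(\tilde x)$ depends on that choice (as the discussion of Read's versus Morris's $F_4$ tables illustrates, where the values differ by the factor $z=-1$ on the classes $A_2, \wtd A_2, B_2$); one must ensure that the \emph{same} choice of lift is used for $\chi^-(\tilde x)$ and for $\tr(\tilde x)|_{\B_W}$, since only the product enters Corollary~\ref{molien cor} and is independent of that choice, but a mismatch would corrupt the answer. Likewise the matching between Carter's class labels and the eigenvalue data for $\det(1-tx)$ must be pinned down for each of the $30$ classes of $E_8$, $13$ of $E_7$, $9$ of $E_6$, etc. The built-in safeguards are: (i) orthogonality of the simple $\wtd W$-characters, used already to detect and fix the $E_8$ typo; (ii) the requirement that every $P_W^-(\chi^-,t)$ come out as an honest polynomial with nonnegative integer coefficients after multiplying by $\prod(1-t^{d_i})$; (iii) the total-dimension check $\sum_{\chi^-} \dim(\chi^-)\,P_W^-(\chi^-,1) = \dim \B_W \cdot |W| / \prod d_i$ or the analogous identity relating $H_W^-$ to the regular representation; and (iv) the known degree count via \eqref{eq:P=H} together with $\deg P_W^-(\chi^-,t) \le N$. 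Passing all of these for every simple character of every exceptional $W$ establishes the theorem.
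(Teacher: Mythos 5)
Your proposal is correct and follows essentially the same route as the paper: the paper's proof consists of exactly the two steps you describe, namely apply Corollary~\ref{molien cor} with the spin character tables of Morris (corrected for the $E_8$ typo and augmented to form the type~$\typeQ$ characters for $E_6, E_7$) as input, and carry out the resulting finite computation in CHEVIE/GAP. Your additional discussion of bookkeeping pitfalls and sanity checks is reasonable elaboration rather than a different method; one small slip is that since $\prod_i d_i = |W|$, your proposed dimension check $\sum_{\chi^-}\dim(\chi^-)\,P_W^-(\chi^-,1) = \dim\B_W\cdot|W|/\prod d_i$ simplifies to $\dim\B_W$ rather than to the total dimension $\dim\B_W\cdot|W|$ of $\B_W\otimes(S^*V)_W$, so that identity as written is not the right consistency test.
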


\begin{proof}
These values are computed using Corollary~ \ref{molien cor}.  The
computations are performed using the GAP~3 package CHEVIE \cite{GAP,
CH}. 
\end{proof}

\section{Tables for spin fake degrees} 
\label{sec:tables}

We use the notation and convention as specified in
Section~\ref{sec:exceptional}. In particular, the characters with
superscripts $\typeQ$ in Table~\ref{sfd G2} through Table~\ref{sfd
E8} are of type $\typeQ$, and those without are of type $\typeM$.
Note that in all tables, the character of the basic spin module
$\B_W$ is listed first.

In Table 6 through Table 9, we also list the spin fake degrees
of classical Weyl groups of type $B_2, A_4, B_4,$ and $D_4$ (see \cite{BW} for notation).

\begin{table}[htbp]
\parbox{.43\linewidth}{\centering
\caption{Spin fake degrees for type $G_2$} \label{sfd G2}
\resizebox{1.2in}{!}{\begin{tabular} {|l|*{3}{c}|} \hline
& $2_s$ & $2_{ss}$ & $2_{sss}$\\
\hline
0&1&&\\
1&1&&1\\
2&0&1&1\\
3&0&2&0\\
\hline
\end{tabular} }}
\hfill
\parbox{.56\linewidth}{\centering
\caption{Spin fake degrees for type $F_4$}\label{sfd F4}
\resizebox{2.5in}{!}{\begin{tabular} {|l|*{9}{c}|} \hline
&$4_s$ & $4_{ss}$ & $8_{sss}$ & $8_{ssss}$ & $12_{ss}$ & $12_s$ & $8_s$ & $24_s$ & $8_{ss}$\\
\hline
0&1&&&&&&&&\\
1&1&&&&&1&&&\\
2&0&&&&&1&&1&\\
3&0&&1&1&&0&&2&\\
4&0&&2&2&&1&&2&1\\
5&1&&1&1&1&2&1&3&1\\
6&1&1&0&0&3&3&2&4&0\\
7&1&2&1&1&4&3&2&4&2\\
8&1&1&3&3&3&2&3&5&3\\
9&0&0&3&3&2&2&2&8&2\\
10&0&0&2&2&4&3&0&9&2\\
11&1&2&2&2&5&4&3&7&3\\
12&2&4&2&2&4&4&6&6&4\\
\hline
\end{tabular}}}
\end{table}
\begin{table}[!h]
\caption{Spin fake degrees for type $E_6$}\label{sfd E6}
\begin{tabular} {|l|*{9}{c}|}
\hline
& $8_s$&$40_s$&$72_s$&$40_{ss}$&$120_s$&$120_{ss}^\typeQ$&$160_s^\typeQ$&$40_{sss}^\typeQ$&$128_s^\typeQ$\\
\hline
0&1&&&&&&&&\\
1&1&1&&&&&&&\\
2&0&1&&&&2&&&\\
3&0&1&&&1&4&&&\\
4&1&3&1&&2&4&2&&\\
5&2&4&3&&3&6&4&&2\\
6&1&4&4&&7&10&6&&4\\
7&1&5&5&1&11&14&12&&6\\
8&2&8&9&3&13&18&18&4&12\\
9&2&9&13&6&18&24&24&8&18\\
10&1&9&14&8&27&30&34&6&24\\
11&2&11&19&8&34&34&44&8&34\\
12&4&14&27&13&39&38&52&14&44\\
13&3&15&29&19&47&44&64&16&50\\
14&1&14&30&18&55&52&74&18&60\\
15&2&16&35&20&59&56&80&24&70\\
16&4&19&39&28&62&56&88&26&72\\
17&3&18&40&26&67&58&92&24&76\\
18&2&16&40&20&70&60&92&24&80\\
\hline
\end{tabular}
\end{table}
\clearpage

\clearpage
\begin{table}[h]
\caption{Spin fake degrees for type $E_7$}\label{sfd E7}
\noindent\resizebox{5in}{!}{\begin{tabular} {|l|*{13}{c}|} \hline
&$16_s^\typeQ$&$96_s^\typeQ$&$336_s^\typeQ$&$560_s^\typeQ$&$224_s^\typeQ$&$224_{ss}^\typeQ$&$1024_s^\typeQ
$&$1440_s^\typeQ$&$1120_s^\typeQ$&$896_s^\typeQ$&$240_s^\typeQ$&$128_s^\typeQ$&$128_{ss}^\typeQ$\\
\hline
0&2&&&&&&&&&&&&\\
1&2&2&&&&&&&&&&&\\
2&0&2&2&&&&&&&&&&\\
3&0&0&4&2&&&&&&&&&\\
4&0&2&4&4&&&2&&&&&&\\
5&2&4&6&4&2&&4&2&&&&&\\
6&2&6&8&6&4&&6&6&&&2&&\\
7&2&6&10&10&4&&12&10&&2&4&&\\
8&2&6&14&16&6&&18&16&4&4&2&&\\
9&2&8&20&22&8&2&26&24&12&6&2&&\\
10&2&10&26&28&10&4&38&36&18&14&6&&\\
11&2&12&30&36&14&4&52&56&26&22&10&2&2\\
12&4&14&36&46&20&8&68&80&40&32&14&4&4\\
13&4&18&44&58&26&14&90&104&60&50&18&4&4\\
14&4&18&54&72&30&18&116&136&86&68&22&8&8\\
15&2&18&64&90&34&22&142&178&116&90&28&12&12\\
16&4&22&72&108&40&30&176&222&150&122&36&14&14\\
17&6&28&82&124&52&40&212&274&192&154&44&20&20\\
18&6&30&94&142&62&48&250&334&240&188&56&26&26\\
19&4&30&104&166&66&58&294&396&288&234&68&30&30\\
20&4&32&116&190&74&68&338&462&346&278&76&38&38\\
21&6&36&130&212&86&82&382&530&410&322&86&48&48\\
22&6&40&140&234&94&96&430&600&468&376&100&52&52\\
23&6&40&148&256&102&104&476&676&526&424&114&60&60\\
24&6&42&158&278&112&116&518&748&588&468&124&70&70\\
25&8&46&170&298&120&132&562&806&648&520&134&74&74\\
26&6&48&180&316&126&142&600&866&702&562&144&82&82\\
27&6&46&186&332&130&146&632&926&748&594&154&90&90\\
28&6&48&190&346&136&156&662&968&784&632&164&92&92\\
29&8&52&196&354&144&166&684&1000&818&656&168&96&96\\
30&8&52&200&362&148&168&696&1026&842&668&170&102&102\\
31&6&50&200&368&142&168&706&1038&848&682&174&100&100\\
\hline
\end{tabular}}
\end{table}
\clearpage

\tiny \setlength\tabcolsep{.5pt}
\begin{longtable}{|l|*{15}{c@{\hspace{.2cm}} }|}
\caption{Spin fake degrees for type $E_8$ (Part~1)}\label{sfd E8}\\
\endfirsthead \caption[]{Spin fake degrees for type $E_8$ (Part 2)} \\
\endhead \hline
 &$16_s$&$112_s$&$320_s$&$448_s$&$224_s$&$448_{ss}$&$1680_s$&$2592_s$
 &$1344_s$&$5600_s$&$4800_s$&$2016_s$&$5600_{ss}$&$9072_s$&$800_s$\\
\hline
0&1&&&&&&&&&&&&&&\\[1pt]
1&1&1&&&&&&&&&&&&&\\[1pt]
2&0&1&&&&1&&&&&&&&&\\[1pt]
3&0&0&&&&1&&&&&&&&&\\[1pt]
4&0&0&&&&0&&&&&&1&&&\\[1pt]
5&0&0&&&&1&&&&&&1&1&&\\[1pt]
6&0&1&&&&2&1&&&&&0&2&&\\[1pt]
7&1&2&1&&&2&2&&&&&1&2&&1\\[1pt]
8&1&2&2&&&2&2&&&&&2&3&&2\\[1pt]
9&0&1&1&&&3&2&&&&&3&5&1&1\\[1pt]
10&0&1&0&&&3&2&&&&&5&8&2&0\\[1pt]
11&1&2&1&&&3&3&&&&&7&11&3&1\\[1pt]
12&1&3&3&&&5&6&1&&1&&7&13&6&3\\[1pt]
13&1&4&4&1&&7&10&3&&4&1&7&17&8&4\\[1pt]
14&1&4&4&2&&8&11&5&&6&3&10&24&10&4\\[1pt]
15&0&2&3&1&&8&11&5&&6&3&15&31&18&4\\[1pt]
16&0&1&2&0&&8&13&4&&7&3&21&39&29&5\\[1pt]
17&1&4&4&1&&10&16&8&&11&8&25&50&35&6\\[1pt]
18&2&7&8&4&&14&23&14&2&21&14&25&61&42&8\\[1pt]
19&2&8&11&6&1&16&32&19&5&32&17&28&71&57&13\\[1pt]
20&2&7&12&6&2&16&35&24&6&38&22&38&86&75&16\\[1pt]
21&1&5&9&6&1&17&35&26&7&45&32&49&109&97&14\\[1pt]
22&0&4&6&5&0&20&40&30&7&56&44&59&133&126&13\\[1pt]
23&1&7&11&6&1&23&50&41&9&71&56&67&154&156&20\\[1pt]
24&3&11&20&12&4&27&65&55&18&98&69&71&175&186&30\\[1pt]
25&2&12&23&18&5&32&80&71&26&132&89&76&203&222&34\\[1pt]
26&1&10&20&19&4&34&86&85&28&155&116&92&241&267&34\\[1pt]
27&1&8&18&16&5&33&86&90&32&171&137&117&281&328&36\\[1pt]
28&1&8&18&15&6&35&95&98&39&197&159&138&320&398&41\\[1pt]
29&2&12&24&21&7&42&115&125&46&243&200&146&364&456&49\\[1pt]
30&4&18&36&34&10&49&139&159&63&308&244&150&408&512&60\\[1pt]
31&4&19&42&43&16&52&161&183&84&369&277&165&452&593&72\\[1pt]
32&2&14&38&39&19&53&170&203&90&407&319&194&510&694&78\\[1pt]
33&1&11&31&35&14&55&171&220&93&445&377&228&580&799&75\\[1pt]
34&1&13&31&39&12&60&185&241&107&506&436&253&646&906&78\\[1pt]
35&2&17&43&48&20&68&217&284&126&584&493&265&702&1013&98\\[1pt]
36&4&23&59&64&29&74&253&335&157&682&552&274&758&1118&120\\[1pt]
37&5&25&64&78&34&78&277&373&187&782&622&294&829&1231&128\\[1pt]
38&2&20&55&75&34&81&283&401&193&843&705&332&915&1369&127\\[1pt]
39&1&15&48&65&31&81&285&421&200&889&776&379&999&1531&131\\[1pt]
40&3&18&54&70&35&83&305&448&227&973&842&411&1073&1684&145\\[1pt]
41&4&25&68&90&43&94&346&511&257&1094&936&417&1145&1804&163\\[1pt]
42&4&30&82&111&50&105&389&584&291&1229&1031&419&1218&1924&182\\[1pt]
43&5&30&87&120&58&106&413&622&328&1336&1100&449&1292&2080&197\\[1pt]
44&3&24&78&110&61&104&413&641&337&1387&1180&501&1380&2257&200\\[1pt]
45&0&18&65&99&52&108&413&667&336&1442&1281&545&1477&2426&196\\[1pt]
46&1&22&69&111&48&114&437&706&367&1551&1370&567&1556&2571&204\\[1pt]
47&5&31&92&135&69&120&482&770&410&1681&1447&571&1613&2692&233\\[1pt]
48&6&35&110&154&87&127&525&838&448&1807&1523&574&1670&2814&261\\[1pt]
49&5&34&107&161&83&130&540&873&476&1906&1603&600&1745&2951&263\\[1pt]
50&4&29&92&150&78&128&527&881&473&1940&1692&649&1832&3098&252\\[1pt]
51&2&23&82&135&74&127&522&891&471&1962&1760&693&1903&3251&254\\[1pt]
52&2&25&90&143&75&131&549&925&504&2052&1807&706&1948&3372&271\\[1pt]
53&5&35&111&173&92&140&592&989&542&2179&1878&690&1987&3428&291\\[1pt]
54&6&40&124&193&104&148&624&1047&566&2278&1947&680&2027&3482&304\\[1pt]
55&4&35&118&186&99&147&627&1053&580&2316&1974&706&2066&3589&308\\[1pt]
56&3&28&102&165&93&140&603&1030&569&2290&2004&753&2112&3694&301\\[1pt]
57&2&25&91&155&87&139&587&1028&553&2280&2054&779&2155&3753&289\\[1pt]
58&2&28&97&169&84&145&609&1055&575&2347&2078&767&2169&3775&294\\[1pt]
59&5&37&121&193&104&149&645&1095&608&2420&2079&741&2156&3766&319\\[1pt]
60&8&42&136&204&122&150&660&1116&618&2442&2080&730&2146&3754&334\\
\hline
\hline
&$2800_{ss}$&$5600_{sss}$&$7168_s$&$1120_s$&$8400_s$&$11200_s$&$6720_s$&$2800_s$
&$1344_{ss}$&$6480_s$&$8192_s$&$2016_{ss}$&$2016_{sss}$&$7168_{ss}$&$896_s$\\
\hline
0&&&&&&&&&&&&&&&\\
1&&&&&&&&&&&&&&&\\
2&&&&&&&&&&&&&&&\\[1pt]
3&&&&&&&&&1&&&&&&\\[1pt]
4&&&&&&&&&2&&&&&&\\[1pt]
5&&&&&&&&&2&&&&&&\\[1pt]
6&&1&&&&&&&2&&&&&&\\[1pt]
7&&2&&&&&1&1&2&&&&&&\\[1pt]
8&&2&1&&&&2&2&3&1&&&&&\\[1pt]
9&&3&3&&&1&2&1&5&1&&&&&\\[1pt]
10&&4&5&&1&3&3&0&7&0&&&&&\\[1pt]
11&1&5&7&&2&4&5&2&8&2&1&&&&\\[1pt]
12&2&9&8&&2&5&8&6&9&5&2&&&1&\\[1pt]
13&1&15&9&&3&9&12&8&10&7&3&&&2&\\[1pt]
14&1&19&14&&5&15&16&9&12&9&6&&&3&\\[1pt]
15&4&21&23&&10&20&20&10&18&13&9&&&6&\\[1pt]
16&7&24&32&&18&27&26&11&24&18&13&&&9&1\\[1pt]
17&8&33&41&&24&39&35&15&25&23&20&1&1&13&1\\[1pt]
18&9&49&49&1&28&53&47&23&25&31&28&3&3&20&0\\[1pt]
19&12&64&57&4&36&68&62&34&29&45&37&6&6&28&2\\[1pt]
20&18&74&75&7&51&87&77&42&36&60&51&10&9&37&4\\[1pt]
21&25&84&103&6&72&114&91&41&45&68&68&10&10&51&4\\[1pt]
22&31&99&129&3&98&147&109&41&54&79&86&10&12&68&5\\[1pt]
23&40&121&151&8&122&180&135&58&59&106&111&17&17&86&9\\[1pt]
24&50&154&173&18&142&217&168&84&61&140&140&26&26&111&12\\[1pt]
25&56&192&198&21&169&269&203&101&67&168&171&35&35&140&14\\[1pt]
26&67&219&240&22&212&331&235&109&79&196&212&43&40&170&18\\[1pt]
27&91&237&302&27&271&390&271&116&95&229&258&49&49&210&22\\[1pt]
28&115&266&360&32&335&457&316&129&109&268&306&58&63&256&29\\[1pt]
29&128&318&403&39&388&544&368&156&114&317&366&74&74&302&35\\[1pt]
30&141&386&446&52&434&638&430&195&115&375&432&94&91&360&38\\[1pt]
31&166&445&501&68&499&732&497&232&126&442&501&116&116&425&48\\[1pt]
32&200&483&579&80&594&838&557&252&148&508&583&136&131&490&61\\[1pt]
33&235&521&681&79&703&962&616&255&169&557&672&145&145&568&66\\[1pt]
34&266&579&775&80&810&1095&690&270&183&614&762&159&169&655&73\\[1pt]
35&298&659&840&106&902&1228&778&326&191&713&866&196&196&739&90\\[1pt]
36&333&754&902&138&984&1367&874&397&196&824&978&236&230&835&104\\[1pt]
37&364&841&991&153&1090&1528&970&435&207&907&1088&265&265&943&112\\[1pt]
38&405&897&1113&159&1236&1702&1052&445&232&981&1212&291&286&1045&126\\[1pt]
39&467&939&1252&169&1401&1863&1135&459&261&1069&1343&311&311&1158&142\\[1pt]
40&522&1011&1371&187&1548&2027&1239&496&277&1168&1469&338&353&1283&158\\[1pt]
41&550&1128&1448&214&1658&2219&1351&562&279&1283&1608&389&389&1397&175\\[1pt]
42&579&1253&1516&244&1758&2413&1464&637&282&1407&1752&438&425&1520&186\\[1pt]
43&634&1339&1625&270&1898&2589&1576&687&299&1521&1887&473&473&1656&203\\[1pt]
44&700&1383&1776&286&2085&2771&1669&700&330&1617&2032&507&504&1777&227\\[1pt]
45&756&1431&1929&284&2270&2968&1752&698&358&1695&2179&524&524&1902&240\\[1pt]
46&799&1521&2042&291&2413&3159&1858&731&368&1786&2314&547&566&2038&249\\[1pt]
47&837&1641&2106&341&2515&3335&1980&822&366&1926&2455&614&614&2156&273\\[1pt]
48&876&1755&2163&390&2610&3505&2092&912&369&2068&2596&674&655&2273&294\\[1pt]
49&917&1831&2269&393&2744&3681&2185&934&384&2147&2719&693&693&2402&300\\[1pt]
50&967&1858&2421&388&2921&3855&2255&913&410&2197&2844&710&713&2508&314\\[1pt]
51&1028&1878&2554&397&3087&3998&2320&915&437&2274&2967&731&731&2605&334\\[1pt]
52&1071&1949&2621&415&3187&4123&2406&963&445&2369&3067&753&772&2714&346\\[1pt]
53&1076&2064&2639&450&3228&4264&2497&1038&432&2464&3166&806&806&2798&356\\[1pt]
54&1083&2157&2661&479&3272&4392&2569&1097&426&2547&3260&853&827&2868&364\\[1pt]
55&1126&2177&2734&481&3369&4475&2619&1105&444&2599&3327&853&853&2949&372\\[1pt]
56&1174&2149&2849&475&3499&4546&2644&1072&470&2620&3391&852&861&3002&386\\[1pt]
57&1194&2146&2929&466&3586&4624&2661&1045&483&2626&3448&858&858&3036&392\\[1pt]
58&1192&2198&2925&465&3592&4677&2696&1070&477&2654&3476&860&876&3081&387\\[1pt]
59&1186&2265&2877&504&3553&4696&2738&1144&460&2718&3498&898&898&3100&394\\[1pt]
60&1184&2294&2852&536&3530&4700&2756&1188&450&2756&3512&932&904&3096&404\\
\hline
\end{longtable}
\clearpage \normalsize


\clearpage

\begin{table}[h]
\caption{Spin fake degrees for type $B_2$}
\begin{tabular} {|l|*{2}{c}|}
\hline
&$(2)$& $(1,1)$\\
\hline
0&1&\\
1&1&1\\
2&0&2\\
\hline
\end{tabular}
\end{table}

\begin{table}[h]
\caption{Spin fake degrees for type $A_4$}
\begin{tabular} {|l|*{3}{c}|}
\hline
&$(5)^\typeQ$& $(4,1)$& $(3,2)$\\
\hline
0&1&&\\
1&1&2&\\
2&1&4&2\\
3&2&6&4\\
4&2&8&6\\
5&2&8&8\\
\hline
\end{tabular}
\end{table}

\begin{table}[h]
\caption{Spin fake degrees for type $B_4$}
\begin{tabular} {|l|*{5}{c}|}
\hline
& $(4)$&$(3,1)$&$(2,2)$&$(2,1,1)$&$(1,1,1,1)$\\
\hline
0&1&&&&\\
1&1&1&&&\\
2&0&2&&1&\\
3&1&2&1&2&1\\
4&1&3&3&2&2\\
5&1&4&3&4&1\\
6&1&5&2&6&1\\
7&1&5&4&6&2\\
8&2&4&6&6&2\\
\hline
\end{tabular}
\end{table}

\begin{table}[h]
\caption{Spin fake degrees for type $D_4$}
\begin{tabular} {|l|*{4}{c}|}
\hline
&$\{(1,1,1,1), (4)\}$&$\{(2,1,1),(3,1)\}$&$\{(2,2)\}_+$&$\{(2,2)\}_-$\\
\hline
0&1&&&\\
1&1&1&&\\
2&0&3&&\\
3&2&4&1&1\\
4&2&5&3&3\\
5&1&7&3&3\\
6&2&8&2&2\\
\hline
\end{tabular}
\end{table}
\clearpage


\begin{thebibliography}{GAP}


\bibitem[Ba]{Ba}  C.~Baltera,
{\em Coinvariant algebras and fake degrees for spin Weyl groups},
Ph.D. dissertation, University of Virginia, 2013. 

\bibitem[BW]{BW}  C.~Baltera and W.~Wang,
{\em Coinvariant algebras and fake degrees for spin Weyl groups of classical type},
Math. Proc. Camb. Phil. Soc. (to appear), 
arXiv:1207.0525v2. 

\bibitem[BL]{BL} W.M.~Beynon and G.~Lusztig,
{\em Some numerical results on the characters of exceptional Weyl
groups,} Math. Proc. Camb. Phil. Soc. {\bf 84} (1978), 417--426.
 
\bibitem[Ca]{Ca} R.~Carter,
{\em Conjugacy classes in the Weyl group}, Compositio Mathematica
{\bf 25} (1972), 1--59.
 
\bibitem[CHE]{CH}
M.~Geck, G.~Hiss, F.~L{\"u}beck, G.~Malle, and G.~Pfeiffer.
\newblock {\sf CHEVIE} -- {A} system for computing and processing generic
  character tables for finite groups of {L}ie type, {W}eyl groups and {H}ecke
  algebras.
\newblock {\em Appl. Algebra Engrg. Comm. Comput.}, 7:175--210, 1996.

\bibitem[GAP]{GAP}
M.~ Sch\"onert et. al. GAP -- Groups, Algorithms, and Programming --
version 3 release 4 patchlevel 4. Lehrstuhl D f\"ur Mathematik,
Rheinisch Westf\"alische Technische Hochschule, Aachen, Germany,
1997.
 
\bibitem[Joz]{Joz1} T.~J\'ozefiak,
{\em Semisimple superalgebras}, In: Algebra--Some Current Trends
(Varna, 1986), pp. 96--113, Lect. Notes in Math. {\bf 1352},
Springer-Berlag, Berlin-New York, 1988.

\bibitem[KW]{KW}T.~Khongsap and W.~Wang,
{\em Hecke-Clifford algebras and spin Hecke algebras I: the
classical affine type}, Transformation Groups {\bf 13} (2008),
389--412.
 
\bibitem[Lu]{Lu2} G.~Lusztig,
Characters of reductive groups over a finite field, Ann. of Math
Stud. {\bf 107},  Princeton University Press, 1984.

\bibitem[Mo]{Mo1} A.~ Morris,
{\em Projective characters of exceptional Weyl groups}, Journal of
Algebra {\bf 29} (1974), 567--586.


\bibitem[Re]{Re1} E.~Read,
{\em Projective characters of the Weyl group of type $F_4$}, J.
London Math. Soc. (2), {\bf 8} (1974), 83--93.


\bibitem[WW]{WW1} J.~Wan and W.~Wang,
{\em Spin invariant theory for the symmetric group}, J. Pure Appl.
Algebra {\bf 215} (2011), 1569--1581.

\end{thebibliography}
\end{document}